\RequirePackage{ifpdf}
\ifpdf 
\documentclass[pdftex]{sigma}
\else
\documentclass{sigma}
\fi


\def\phi{{\varphi}}

\def\T*M{\mathrm{T}^*\mathrm{M}}

\let\na\nabla

\DeclareMathAlphabet{\doba}{U}{msb}{m}{n}

\def\T{\mathrm{T}}
\def\Vol{{\mathop{\rm Vol}}}
\def\Ric{{\mathop{\rm Ric}}}

\def\Spin{{\mathop{\rm Spin}}}
\def\SO{{\mathop{\rm SO}}}

\def\aire{\mathrm{Area}(M^2,g)}
\def\Vol{{\mathop{\rm Vol}}}

\def\Ric{{\mathop{\rm Ric}}}

\def\Spin{{\mathop{\rm Spin}}}
\def\SO{{\mathop{\rm SO}}}

\let\<\langle
\let\>\rangle

\let\<\langle
\let\>\rangle

\long\def\komment#1{}

\begin{document}

\allowdisplaybreaks

\renewcommand{\PaperNumber}{119}

\FirstPageHeading

\renewcommand{\thefootnote}{$\star$}

\ArticleName{Branson's $\boldsymbol{Q}$-curvature in Riemannian\\ and Spin Geometry\footnote{This paper is a
contribution to the Proceedings of the 2007 Midwest
Geometry Conference in honor of Thomas~P.\ Branson. The full collection is available at
\href{http://www.emis.de/journals/SIGMA/MGC2007.html}{http://www.emis.de/journals/SIGMA/MGC2007.html}}}

\ShortArticleName{Branson's $Q$-curvature in Riemannian  and Spin Geometry}

\Author{Oussama HIJAZI and  Simon RAULOT}

\AuthorNameForHeading{O. Hijazi and S. Raulot}

\Address{Institut {\'E}lie Cartan Nancy, Nancy-Universit{\'e}, CNRS, INRIA,\\
Boulevard des Aiguillettes B.P. 239 F-54506 Vand{\oe}uvre-l{\`e}s-Nancy Cedex, France}
\Email{\href{mailto:hijazi@iecn.u-nancy.fr}{hijazi@iecn.u-nancy.fr}, \href{mailto:Simon.Raulot@unine.ch}{Simon.Raulot@unine.ch}}

\ArticleDates{Received August 25, 2007, in f\/inal form November
29, 2007; Published online December 11, 2007}

\Abstract{On a closed $n$-dimensional manifold, $n\ge 5$, we compare the
  three basic conformally covariant operators: the Paneitz--Branson, the
  Yamabe and the Dirac operator (if~the manifold is spin) through their
  f\/irst eigenvalues. On a closed $4$-dimensional Riemannian manifold, we give a lower bound for the square of the f\/irst eigenvalue of the Yamabe ope\-rator in terms of the total Branson's $Q$-curvature. As a consequence, if the manifold is spin, we relate the f\/irst eigenvalue of the Dirac operator to the total Branson's $Q$-curvature.    Equality cases are also characterized.}

\Keywords{Branson's $Q$-curvature; eigenvalues; Yamabe operator;
  Paneitz--Branson operator;  Dirac operator; $\sigma_k$-curvatures; Yamabe invariant; conformal geometry; Killing spinors}

\Classification{53C20; 53C27; 58J50}

\rightline{\it To the memory of Tom Branson}

\vspace{2mm}

\begin{flushright}
\begin{minipage}{14cm}
\it Tom has deeply influenced my life. With him,
I learned to push the limits of what would concretely mean
to have a clear and deep thinking, to take a huge distance
from things and events so that the essence could be touched.\\[1mm]
\null \hfill Oussama Hijazi
\end{minipage}
\end{flushright}

\section{Introduction}

The scalar curvature function behaves nicely under a conformal change of the metric. The Yamabe operator relates
the scalar curvatures associated with two metrics in the same conformal class. From the conformal point of view dimension $2$ is special and the scalar curvature corresponds to (twice) the Gauss curvature. The problem of conformally deforming the scalar curvature into a constant  is known as the Yamabe problem, it  has been intensively studied in the seventies and solved in the beginning eighties.

 On a spin compact manifold it is an important fact, through the Schr{\"o}dinger--Lichnerowicz formula, that the scalar curvature is closely related to the eigenvalues of the Dirac operator and its sign inf\/luences the topology of the manifold.

The conformal behavior of the scalar curvature and that of the Dirac operator allowed to establish lower bounds for the square of the eigenvalues of the Dirac operator in terms of scalar conformal invariants, the Yamabe invariant in dimension at least $3$ and the total scalar curvature in dimension $2$ (see \cite{hijazi:86, hijazi:91, baer:92}).

 Given this inequality for surfaces, one might ask whether such integral inequalities could be true in any dimension. It is shown in \cite{ammann-baer} that this could not be true with the total scalar curvature. However one could expect such integral lower bounds associated with  other curvatures.

 The Branson $Q$-curvature is a scalar function which shares with the scalar curvature inte\-res\-ting conformal behavior. The operator which relates the $Q$-curvatures associated with two conformally related metrics is the Paneitz--Bransonoperator, a $4$-th order dif\/ferential operator. Another curvature function which is of special interest from the conformal aspect is the $\sigma_k$-curvature, which is the $k$-th symmetric function associated with the eigenvalues of the Schouten tensor (the tensor which measures the defect of the conformal invariance of the Riemann tensor).

 Taking into account the analogy between the scalar curvature and the $Q$-curvature, some natural questions could be asked: what would be the role of the $Q$-curvature in Spin Geomet\-ry and is there any relation between the spectra of the three natural conformally covariant dif\/ferential operators : Dirac, Yamabe and Branson--Paneitz?

 In this paper, a f\/irst answer is given for $n$-dimensional closed Riemannian  manifolds. For $n=4$,  we establish a new lower bound for the square of the f\/irst eigenvalue of the Yamabe operator in terms of the total Branson's $Q$-curvature (see Theorem \ref{main4}). For $n\ge 5$, we show that, up to a constant, the square of the f\/irst eigenvalue of the Yamabe operator  is at least equal to that of  the Paneitz--Branson operator (see Theorem~\ref{main}).

 In case the manifold is spin, we make use of what is {\it called}  the
 Hijazi inequality to relate the f\/irst eigenvalue of the Dirac operator
 to the total Branson's $Q$-curvature, if $n=4$ (see Corollary~\ref{corr4}),  and to the f\/irst eigenvalue of the  Paneitz--Branson
 operator,  if $n\ge 5$ (see Corollary~\ref{corr1}).

 The key classical argument in Spin Geometry (see \cite{hijazi:86}) is to consider on a Riemannian manifold a special metric in the conformal class associated with an appropriate choice of the conformal factor, namely an eigenfunction associated with the f\/irst eigenvalue of the Yamabe operator.

\section{Natural geometric operators\\ in conformal Riemannian geometry}\label{ngocg}

Consider a compact Riemannian manifold $(M^n,g)$ and let
$[g]=\{g_u:=e^{2u}g\,/\,u\in C^{\infty}(M)\}$ be the conformal class of the
metric $g$. A class of dif\/ferential operators of particular interest in
Riemannian Geometry is that of conformally covariant operators. If $A:=A_g$ is a formally self-adjoint geometric dif\/ferential operator acting on functions (or on sections of vector bundles) over $(M^n,g)$ and $A_u:=A_{g_u}$ then $A$ is said to be conformally covariant of bidegree $(a,b)\in\mathbb{R}^2$ if and only if
\begin{gather*}
A_u(\cdot)=e^{-bu} A( e^{au}\cdot).
\end{gather*}
We now give some relevant examples of such operators.

\subsection{The Yamabe  operator}\label{laplacienconforme}


In dimension $n=2$, the Laplacian $\Delta:=\delta d$ acting on smooth functions is a conformally covariant dif\/ferential operator of bidegree $(0,2)$ since it satisf\/ies $\Delta_u=e^{-2u} \Delta$. It is interesting to note that we have the {\it Gauss curvature equation}:
\begin{gather}\label{equationgauss}
\Delta u + K = K_u  e^{2u},
\end{gather}
where $K_u$ is the Gauss curvature of $(M^2,g_u)$. Using this formula, we can easily conclude that:
\begin{gather*}
\int_{M} K dv=\int_M K_u dv_u,
\end{gather*}
is a conformal invariant of the surface $M$ equipped with the conformal class of $g$. In fact, it is a topological invariant due to the Gauss--Bonnet formula:
\begin{gather}\label{gausssurface}
2\pi\chi(M^2)=\int_{M}Kdv,
\end{gather}
where $\chi(M^2)$ is the Euler--Poincar{\'e} characteristic class of $M$.

In dimension $n\geq 3$, the Yamabe operator (or the conformal Laplacian):
\begin{gather*}
L:=4\frac{n-1}{n-2}\Delta+R,
\end{gather*}
where $R$ is the scalar curvature of $(M^n,g)$, is conformally covariant of bidegree $(\frac{n-2}{2},\frac{n+2}{2})$. Indeed, we have the following relation:
\begin{gather*}
L_u f=e^{-\frac{n+2}{2}u} L(e^{\frac{n-2}{2}u}f),
\end{gather*}
for all $f\in C^{\infty}(M)$. It is important to note that this operator relates the scalar curvatures of the manifold $M$ associated with two metrics in the same conformal class. Indeed, we have:
\begin{gather*}
Lu=R_u\, u^{\frac{n+2}{n-2}}
\end{gather*}
for $g_u=u^{\frac{4}{n-2}}\in[g]$, where $u$ is a smooth positive function.

\subsection[The Branson-Paneitz operator]{The Paneitz--Branson operator}\label{paneitz}


In dimension $n\geq 3$, the Paneitz--Branson operator, def\/ined by
\begin{gather*}
P:=\Delta^2+\delta(\alpha_nR\,{\rm{Id}}+\beta_n \Ric)d+\frac{n-4}{2}Q
\end{gather*}
is a self-ajdoint elliptic fourth-order conformally covariant dif\/ferential operator of bidegree $(\frac{n-4}{2},\frac{n+4}{2})$, i.e.,
\begin{gather*}
P_u f=e^{-\frac{n+4}{2}u}\, P(e^{\frac{n-4}{2}u}f),
\end{gather*}
for all $f\in C^{\infty}(M)$. Here $\alpha_n=\frac{(n-2)^2+4}{2(n-1)(n-2)}$, $\beta_n=-\frac{4}{n-2}$, $\Ric$ is the Ricci tensor and $Q$ is the Branson $Q$-curvature of $(M^n,g)$ def\/ined by:
\begin{gather}\label{qcurvature}
Q=\frac{n}{8(n-1)^2}R^2-2|S|^2+\frac{1}{2(n-1)}\Delta R,
\end{gather}
where:
\begin{gather*}
S_{ij}  =  \frac{1}{n-2}A_{ij}=\frac{1}{n-2}\left(\Ric_{ij}-\frac{1}{2(n-1)}R g_{ij}\right).
\end{gather*}
is the Schouten tensor. The $Q$-curvature together with the Paneitz--Branson operator share a~similar conformal behavior as that of the scalar curvature and the Yamabe operator. Indeed, if $n\geq 5$ and $g_u=u^{\frac{4}{n-4}}$ we have:
\begin{gather}\label{paneitzconforme}
Pu=\frac{n-4}{2}\, Q_u\, u^{\frac{n+4}{n-4}},
\end{gather}
 where $Q_u$ is the $Q$-curvature of $(M^n, g_u)$. On a $4$-dimensional manifold, the Paneitz--Branson operator $P$ and the $Q$-curvature are analogues of the Laplacian and the Gauss curvature on  $2$-dimensional manifolds. In fact, we have the following $Q$-{\it curvature equation}:
\begin{gather*}
Pu+Q=Q_u \, e^{4u}
\end{gather*}
for $g_u=e^{2u}g\in[g]$ (compare with (\ref{equationgauss})). Again, using this equation, we can easily deduce that:
\begin{gather*}
\int_{M}Q\, dv=\int_M Q_u\, dv_u
\end{gather*}
is a conformal invariant. Another way to obtain the conformal covariance of the total $Q$-curvature functional comes from the Chern--Gauss--Bonnet formula for $4$-dimensional manifolds:
\begin{gather}\label{gaussbonnet}
16\pi^2\chi(M^4)=\int_{M}\left(\frac{1}{2}|W|^2+\frac{1}{12}R^2-|E|^2\right)dv,
\end{gather}
 where $E:=\Ric-(1/n)R g$ is the Einstein tensor of $(M^4,g)$. Thus using (\ref{qcurvature}) for $n=4$, we obtain:
\begin{gather}\label{CGB_Q}
16\pi^2\chi(M^4)=\frac{1}{2}\int_{M}|W|^2dv+2\int_{M}Qdv.
\end{gather}
Since the expression $|W|^2dv$ is a pointwise conformal invariant, we conclude that the total $Q$-curvature is a conformal invariant.

\section{Spin geometry, the Dirac operator\\ and classical eigenvalue estimates}\label{dirac}

For convenience we brief\/ly recall some standard facts about Riemannian Spin Geometry (see~\cite{lawson.michelson:89} or~\cite{fried}) and establish with some details the fact  that, as the Yamabe and Paneitz--Branson operators, the Dirac operator $D$ acting on smooth sections of the spinor bundle is a conformal covariant operator of bidegree $(\frac{n-1}{2},\frac{n+1}{2})$. We then give a proof of classical conformal eigenvalue lower bounds on the spectrum of the Dirac operator.

We consider a closed compact Riemannian manifold $(M^n,g)$ equipped with a spin structure, which is a topological restriction corresponding to an orientability condition of order two. Thanks to this structure, one can construct a complex vector bundle $\Sigma_gM:=\Sigma M$ (the bundle of complex spinors) of rank $2^{[n/2]}$ over $M$. A smooth section $\psi\in\Gamma(\Sigma M)$ of this vector bundle is called a spinor f\/ield. Note that this vector bundle depends on the Riemannian metric. The spinor bundle $\Sigma M$ is endowed with
\begin{enumerate}\itemsep=0pt
\item[1)] Clif\/ford multiplication, that is an action of the tangent bundle on spinor f\/ields:
\[
\begin{array}{cll}
{\rm{T}}M\otimes \Sigma M & \longrightarrow & \Sigma M\\
X\otimes \psi & \longmapsto & X\cdot\psi,
\end{array}
\]

\item[2)] the natural spinorial Levi-Civita connection $\nabla$ acting on smooth spinor f\/ields correspon\-ding to the Levi-Civita connection  (also denoted by $\nabla$) and satisfying:
\[
\nabla_X(Y\cdot\varphi)=(\nabla_X Y)\cdot\varphi+Y\cdot\nabla_X\varphi,
\]

\item[3)] a natural Hermitian scalar product $\<\ ,\ \>$ such that:
\begin{gather*}
\<X\cdot\psi,X\cdot\varphi\>=g(X,X)\<\psi,\varphi\>
\end{gather*}
and compatible with the spin connection, that is:
\begin{gather*}
X\<\varphi,\psi\>  = \<\na_X\varphi,\psi\>+\<\varphi,\na_X\psi\>,
\end{gather*}
\end{enumerate}
for all $X,Y\in\Gamma({\rm{T}}M)$ and $\psi,\varphi\in\Gamma(\Sigma M)$. We can f\/inally def\/ine a dif\/ferential operator acting on smooth spinor f\/ields, the Dirac operator, locally given by:
\[
\begin{array}{ccll}
D: & \Gamma(\Sigma M) & \longrightarrow & \Gamma(\Sigma M)\\
 & \psi & \longmapsto & D\psi=\sum\limits_{i=1}^ne_i\cdot\nabla_{e_i}\psi,
\end{array}
\]
 where $\{e_1,\dots,e_n\}$ is a local $g$-orthonormal frame. This f\/irst order dif\/ferential operator is elliptic and formally self adjoint.

\subsection{Conformal covariance of the Dirac operator}

We now focus on the conformal behavior of spinors on a Riemannian spin manifolds. We explain with some details the conformal covariance of the Dirac operator  and give an application of this property. For more details, we refer to \cite{hitchin:74,hijazi:86} or \cite{bourguignon.hijazi.milhorat.moroianu}. So consider a smooth function $u$ on the manifold $M$, and let $g_u=e^{2u}g$ be a conformal change of the metric. Then we have an obvious identif\/ication between the two $\SO_n$-principal bundles of $g$ and $g_u$-orthonormal oriented frames denoted respectively by $\SO M$ and $\SO_{u} M$. We can thus identify the corresponding $\Spin_n$-principal bundles $\Spin M$ and $\Spin_{u} M$, leading to a bundle isometry
\begin{gather*}
\begin{array}{ccc}\label{cc}
\Sigma M & \longrightarrow & \Sigma_uM \\
\varphi & \longmapsto & \varphi_u.
\end{array}
\end{gather*}
We can also relate the corresponding Levi-Civita connections, Clif\/ford multiplications and Hermitian scalar  products. Indeed, denoting by $\nabla^u$, $\cdot_u$ and $\<  \ ,\ \>_u$ the associated data which act on sections of the bundle $\Sigma_uM$, we can easily show that:
\begin{gather*}
\na^u_X\psi_u  =  \left(\na_X\psi-\frac{1}{2}X\cdot du\cdot\psi-\frac{1}{2}X(u)\psi\right)_u,\\
X_u\cdot_u\psi_u  =  (X\cdot\psi)_u,\\
\<\psi_u,\varphi_u\>_u  =  \<\psi,\varphi\>,
\end{gather*}
for all $\psi,\varphi\in\Gamma(\Sigma M)$, $X\in\Gamma({\rm{T}}M)$ and where $X_u:=e^{-u}X$ denotes the vector f\/ield over $(M^n,g_u)$ under the identif\/ication explained above. Using these identif\/ications, one can deduce the relation between the Dirac operators $D$ and $D_u$ acting respectively on sections of~$\Sigma M$ and~$\Sigma_uM$, that~is:
\begin{gather}\label{diraconforme}
D_u\psi_u=\left(e^{-\frac{n+1}{2}u}D(e^{\frac{n-1}{2}u}\psi)\right)_u.
\end{gather}
This formula clearly shows that the Dirac operator is a conformally covariant dif\/ferential opera\-tor of bidegree $(\frac{n-1}{2},\frac{n+1}{2})$.

\subsection{Eigenvalues of the Dirac operator}

A powerfull tool in the study of the Dirac operator is the Schr{\"o}dinger--Lichnerowicz formula which relates the square of the Dirac operator with the spinorial Laplacian. More precisely, we have:
\begin{gather*}
D^2=\na^*\na+\frac{1}{4}R,
\end{gather*}
where $\nabla^*$ is the $L^2$-formal adjoint of $\na$. An integration by parts using this formula leads to the following integral identity:
\begin{gather}\label{integrallichnerowicz}
\int_M|D\psi|^2dv=\int_M|\nabla\psi|^2dv+\frac{1}{4}\int_MR|\psi|^2dv
\end{gather}
for all $\psi\in\Gamma(\Sigma M)$. Combining this fundamental identity with the  Atiyah--Singer Index Theorem~\cite{atiyahpatodi} implies topological obstructions to  the existence of metrics with positive scalar curvature (see~\cite{lichnerowicz}). This vanishing result can be seen as a non-optimal estimate on the spectrum of the Dirac operator. In fact, if the scalar curvature is positive  and if $\psi$ is an eigenspinor associated with the f\/irst eigenvalue $\lambda_1(D)$ of the Dirac operator, then by (\ref{integrallichnerowicz}) one gets:
\begin{gather*}
\lambda_1^2(D)> \frac{1}{4}\,\underset{M}{\inf}(R).
\end{gather*}
Optimal eigenvalues estimate could be obtained by introducing the twistor operator $T$, which is the projection of $\nabla$ on the kernel of the Clif\/ford multiplication. It is locally given by:
\begin{gather*}
T_X\psi=\na_X\psi+\frac{1}{n}X\cdot D\psi
\end{gather*}
for all $\psi\in\Gamma(\Sigma M)$ and $X\in\Gamma({\rm{T}}M)$. Thus using the relation:
\begin{gather}\label{twistor}
|\na\psi|^2=|T\psi|^2+\frac{1}{n}|D\psi|^2,
\end{gather}
T.~Friedrich \cite{friedrich}  proved that the  f\/irst eigenvalue $\lambda_1(D)$ of the Dirac operator satisf\/ies:
\begin{gather*}
\lambda_1(D)^2\geq \frac{n}{4(n-1)}\,\underset{M}{\inf}(R)
\end{gather*}
with equality if and only if the eigenspinor associated with the f\/irst eigenvalue is a Killing spinor, that is for all $X\in\Gamma({\rm{T}}M)$:
\begin{gather}\label{killing}
\na_X\psi=-\frac{\lambda_1(D)}{n}X\cdot\psi .
\end{gather}

\subsection{Conformal lower bounds for the  eigenvalues of the Dirac operator}

We now prove the following result due to the f\/irst author:
\begin{theorem}[\cite{hijazi:86,hijazi:91}] \label{hijazi}
Let $(M^n,g)$ be a closed compact Riemannian spin manifold of dimension $n\geq 2$. Then the first eigenvalue of the Dirac operator satisfies:
\begin{gather}\label{hijazi1}
\lambda_1(D)^2\geq \frac{n}{4(n-1)}\,\underset{u}{\sup}\,\underset{M}{\inf}\,\big(R_ue^{2u}\big).
\end{gather}
Moreover, equality is achieved if and only if the eigenspinor associated with the eigenvalue $\lambda_1(D)$ is a Killing spinor. In particular, the manifold $(M^n,g)$ is Einstein.
\end{theorem}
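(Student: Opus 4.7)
The plan is to exploit the conformal covariance \eqref{diraconforme} of $D$ to reduce to an application of the Schr\"o\-dinger--Lichnerowicz formula \eqref{integrallichnerowicz} together with the twistor identity \eqref{twistor} in a conformally related metric. Fix any $u\in C^\infty(M)$ and set $\bar g=e^{2u}g$. Let $\psi$ be an eigenspinor for the first eigenvalue, $D\psi=\lambda_1(D)\,\psi$, and transport it to the $\bar g$-spinor bundle by
\begin{gather*}
\bar\psi:=\bigl(e^{-\frac{n-1}{2}u}\psi\bigr)_u\in\Gamma(\Sigma_u M).
\end{gather*}
Applying \eqref{diraconforme} to the function $f=e^{-\frac{n-1}{2}u}\psi$ yields the pointwise identity
\begin{gather*}
\bar D\bar\psi=\bigl(e^{-\frac{n+1}{2}u}D\psi\bigr)_u=\lambda_1(D)\,e^{-u}\bar\psi.
\end{gather*}

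Next, I would combine the Schr\"odinger--Lichnerowicz formula \eqref{integrallichnerowicz} applied to $\bar\psi$ in the metric $\bar g$ with the orthogonal decomposition \eqref{twistor}, which is also valid in $\bar g$. Integrating $|\bar\nabla\bar\psi|^2=|\bar T\bar\psi|^2+\tfrac{1}{n}|\bar D\bar\psi|^2$ against $d\bar v$ and substituting the integrated Schr\"odinger--Lichnerowicz identity gives
\begin{gather*}
\frac{n-1}{n}\int_M|\bar D\bar\psi|^2\,d\bar v=\int_M|\bar T\bar\psi|^2\,d\bar v+\frac{1}{4}\int_M\bar R\,|\bar\psi|^2\,d\bar v.
\end{gather*}
Dropping the nonnegative twistor term and plugging in $|\bar D\bar\psi|^2=\lambda_1(D)^2 e^{-2u}|\bar\psi|^2$ produces
\begin{gather*}
\frac{n-1}{n}\,\lambda_1(D)^2\int_M e^{-2u}|\bar\psi|^2\,d\bar v\geq\frac{1}{4}\int_M\bigl(\bar R\,e^{2u}\bigr)\,e^{-2u}|\bar\psi|^2\,d\bar v.
\end{gather*}

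Bounding the right-hand side from below by $\inf_M(\bar R e^{2u})$ times the weighted norm on the left, and dividing out (the eigenspinor does not vanish identically, so the weighted integral is strictly positive), yields
\begin{gather*}
\lambda_1(D)^2\geq\frac{n}{4(n-1)}\inf_M\bigl(\bar R\,e^{2u}\bigr).
\end{gather*}
Since $u$ was arbitrary, taking the supremum gives \eqref{hijazi1}.

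For the equality case, saturating the inequality forces three things simultaneously: the twistor term $\int|\bar T\bar\psi|^2 d\bar v$ must vanish, so $\bar\psi$ is a twistor spinor; the bound by the infimum must be sharp, so $\bar R e^{2u}$ is constant on the (open, dense) set where $\bar\psi\neq 0$; and combining with $\bar D\bar\psi=\lambda_1(D)e^{-u}\bar\psi$ forces $\bar\psi$ to be a Killing spinor in the metric $\bar g$ in the sense of \eqref{killing}. A manifold admitting a nontrivial Killing spinor is Einstein (by contracting \eqref{killing} with the curvature), so $\bar R$ is constant and hence $u$ is constant; thus the extremal metric may be taken to be $g$ itself and $\psi$ satisfies \eqref{killing}. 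The main delicacy in carrying this out is tracking the conformal weights consistently between $|\cdot|_{g}$, $|\cdot|_{\bar g}$ and the volume element $d\bar v=e^{nu}dv$, so that the factor $e^{-2u}$ attached to $|\bar D\bar\psi|^2$ correctly produces the combination $\bar R e^{2u}$ in the final estimate.
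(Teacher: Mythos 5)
Your proof follows the same route as the paper's: transport the eigenspinor to the conformally changed spinor bundle using \eqref{diraconforme}, apply the integrated Schr\"odinger--Lichnerowicz formula together with the twistor decomposition \eqref{twistor} in the metric $\bar g$, drop the nonnegative twistor term, bound below by $\inf_M\bigl(\bar R\,e^{2u}\bigr)$, and take the supremum over $u$. Up to notation this is exactly the paper's argument for \eqref{hijazi1}.

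The one place where your reasoning needs to be corrected is the order of deductions in the equality case. Vanishing of the twistor term gives
\begin{gather*}
\bar\nabla_X\bar\psi=-\tfrac{1}{n}\,X\cdot_u\bar D\bar\psi=-\tfrac{\lambda_1(D)}{n}\,e^{-u}\,X\cdot_u\bar\psi,
\end{gather*}
which is a \emph{generalized} Killing equation with the non-constant coefficient $\lambda_1(D)e^{-u}/n$, not yet a Killing spinor in the sense of \eqref{killing}. Your chain ``Killing spinor $\Rightarrow$ Einstein $\Rightarrow$ $\bar R$ constant $\Rightarrow$ $u$ constant'' is therefore circular, since it invokes the Killing spinor before $u$ is known to be constant. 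What is actually needed is a separate argument that the function appearing in the generalized Killing equation is constant; for instance, apply $\bar D$ to $\bar D\bar\psi=\lambda_1(D)e^{-u}\bar\psi$ and compare with the twistor integrability $\bar D^2\bar\psi=\tfrac{n\bar R}{4(n-1)}\bar\psi$, whose Clifford-vector part forces $\lambda_1(D)\,du=0$ on the dense set where $\bar\psi\neq 0$. The paper simply defers this step to \cite{cours}; if you want a self-contained proof you should supply that computation rather than the circular chain. Once $u$ is constant, everything else you wrote (Killing spinor for $g$, Einstein) goes through.
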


\begin{proof}
Consider an eigenspinor $\psi\in\Gamma(\Sigma M)$ of the Dirac operator associated with the eigenvalue~$\lambda$. Now if we let $\varphi=e^{-\frac{n-1}{2}u}\psi\in\Gamma(\Sigma M)$ then the relation (\ref{diraconforme}) gives $D_u\varphi_u=\lambda e^{-u}\varphi_u$. Thus combining formulae~(\ref{integrallichnerowicz}) and (\ref{twistor}), we have:
\begin{gather*}
\frac{n-1}{n}\int_M|D_u\varphi_u|^2dv_u=\int_M|T^u\varphi_u|^2dv_u+\frac{1}{4}\int_MR_u|\varphi_u|^2dv_u,
\end{gather*}
which leads to:
\begin{gather*}
\frac{n-1}{n}\lambda^2\int_M e^{-2u}|\varphi_u|^2dv_u\geq\frac{1}{4}\int_MR_u|\varphi_u|^2dv_u\geq\frac{1}{4}\underset{M}{\inf}(R_u e^{2u})\int_Me^{-2u}|\varphi_u|^2dv_u
\end{gather*}
for all $u\in C^\infty(M)$. Inequality~(\ref{hijazi1}) follows directly. Suppose now that equality is achieved in~(\ref{hijazi1}), then we have:
\begin{gather*}
T^u_{X_u}\varphi_u=\na^u_{X_u}\varphi_u+\big(\lambda_1(D)/n\big) e^{-u} X_u\cdot_u\varphi_u=0,
\end{gather*}
for all $X\in\Gamma({\rm{T}}M)$. However, one can compute that in this case, the function $u$ has to be constant (see \cite{cours} for example) and thus $\psi\in\Gamma(\Sigma M)$ is a Killing spinor (that is it satisf\/ies equation~(\ref{killing})).
\end{proof}

We will now derive two results from the interpretation of the right-hand side of inequality~(\ref{hijazi1}). As we will see, these estimates will lead to some natural geometric invariants.

\subsubsection{The 2-dimensional case}\label{n=2}

We focus here on the case of compact closed surfaces and show that
\begin{gather*}
\lambda_1(D)^2\geq \frac{2\pi\,\chi(M^2)}{\aire}.
\end{gather*}
This has f\/irst been observed by B{\"a}r (see \cite{baer:92}).  For $n=2$, inequality~(\ref{hijazi1}) reads:
\begin{gather*}
\lambda_1(D)^2\geq \frac{1}{2}\,\underset{u}{\sup}\,\underset{M}{\inf}\,\big(R_ue^{2u}\big).
\end{gather*}
 First note that for all $u\in C^\infty(M)$:
\begin{gather*}
\underset{M}{\inf}\,\big(R_ue^{2u}\big)\leq\frac{1}{\aire}\int_M R_udv_u=\frac{1}{\aire}\int_M Rdv=\frac{4\pi\,\chi(M^2)}{\aire},
\end{gather*}
where we have used the fact that $R_u=2K_u$ and the Gauss--Bonnet formula~(\ref{gausssurface}). On the other hand, one can easily show the existence of a unique (up to an additive constant) smooth function $u_0\in C^\infty(M)$ such that:
\begin{gather*}
\Delta u_0=\frac{1}{\aire}\int_M K dv-K,
\end{gather*}
which proves the relation:
\begin{gather*}
\frac{1}{2}\,\underset{u}{\sup}\,\underset{M}{\inf}\,\big(R_ue^{2u}\big)=\frac{1}{2}\,R_{u_0}e^{2u_0}=\frac{2\pi\,\chi(M^2)}{\aire}.
\end{gather*}

\subsubsection[The case $n\geq 3$]{The case $\boldsymbol{n\geq 3}$}\label{n>2}

Here we will show that the right-hand side of inequality (\ref{hijazi1}) is given by the f\/irst eigenvalue~$\lambda_1(L)$ of the conformal Laplacian $L$ (see Section~\ref{laplacienconforme}), that is we get:
\begin{gather}\label{hijazi2}
\lambda_1(D)^2\geq\frac{n}{4(n-1)}\,\lambda_1(L).
\end{gather}
 First note that since $n\geq 3$, one can consider the conformal change of metrics def\/ined by $g_h=h^{\frac{4}{n-2}} g$ with $h$ a smooth positive function on $M$ and thus we have:
\begin{gather*}
R_u e^{2u}=R_h h^{\frac{4}{n-2}}=h^{-1}L h.
\end{gather*}
 Now we choose the conformal weight $h=h_1$ as being an eigenfunction of the conformal Laplacian associated with the f\/irst eigenvalue $\lambda_1(L)$. Such a function can be assumed to be positive and normalized (with unit $L^2$-norm) and satisf\/ies:
\begin{gather*}
\lambda_1(L)=4\frac{n-1}{n-2}\int_M h_1\Delta h_1dv+\int_M Rh^2_1dv.
\end{gather*}
On the other hand, for any $f$ smooth and positive function on $M$, we can write $h_1=f F$ with $F$ a smooth positive function on $M$. Using this expression in the preceding integral equality with an integration by parts leads to:
\begin{gather*}
\lambda_1(L)=\int_M\left(4\frac{n-1}{n-2}f^{-1}\Delta f+R\right)h_1^2dv+4\frac{n-1}{n-2}\int_Mf^2|\na F|^2dv,
\end{gather*}
then
\begin{gather*}
\lambda_1(L)\geq \underset{M}{\inf}\left(4\frac{n-1}{n-2}f^{-1}\Delta f+R\right)=\underset{M}{\inf}\big(f^{-1}L f\big)
\end{gather*}
for all $f$ smooth and positive. It is clear that the above inequality is achieved if and only if the function $f$ is an eigenfunction of $L$ associated with its f\/irst eigenvalue.  Using this inequality, we can compare the spectrum of the Dirac operator with a conformal invariant of the manifold $(M^n,g)$, {\it the Yamabe invariant} $Y(M^n,[g])$. This invariant appears naturally in the context of the Yamabe problem (see \cite{lee.parker:87} for example) and plays a fundamental role in its solution. It is def\/ined by:
\begin{gather*}
Y(M^n,[g])=\underset{f\in H^2_1\setminus\{0\}}{\inf}\frac{\int_{M}\big(4\frac{n-1}{n-2}|\na f|^2+R\,f^2\big)dv}{\Big(\int_{M}|f|^{\frac{2n}{n-2}}dv\Big)^{\frac{n-2}{2}}},
\end{gather*}
where $H^2_1$ denotes the space of $L^2$-integrable functions  as well as their f\/irst derivatives. Indeed, using the variational characterization of $\lambda_1(L)$ given by the Rayleigh quotient:
\begin{gather*}
\lambda_1(L)=\underset{f\in H^2_1\setminus\{0\}}{\inf}\frac{\int_{M}\big(4\frac{n-1}{n-2}|\na f|^2+R\,f^2\big)dv}{\int_{M}f^2dv},
\end{gather*}
and applying the H{\"o}lder inequality leads to $\lambda_1(L) \Vol(M^n,g)^{\frac{2}{n}}\geq Y(M^n,[g])$. Combining this estimate with Inequality~(\ref{hijazi2}) shows that:
\begin{gather*}
\lambda_1(D)^2 \Vol(M^n,g)^{\frac{2}{n}}\geq\frac{n}{4(n-1)}\,Y(M^n,[g]).
\end{gather*}

\section[First eigenvalues of conformally covariant differential operators]{First eigenvalues of conformally covariant\\ dif\/ferential operators}

In this section, we show that in dimension $4$ the total $Q$-curvature bounds from below the square of the Yamabe invariant. Using the conformal covariance of the Yamabe and the Paneitz--Branson operators together with a special choice of the conformal factor, we get a relation between their f\/irst eigenvalues. This combined  with the inequality  (\ref{hijazi2}) relate the Dirac, Yamabe and Paneitz--Branson operators through appropriate powers of their f\/irst eigenvalues. We show:

\begin{theorem}\label{main4}
Let $(M^4,g)$ be a closed compact $4$-dimensional Riemannian manifold. Then, the first eigenvalue of the Yamabe operator satisfies
\begin{gather}\label{Inequality4}
\lambda_1(L)^2\geq\frac{24}{\Vol(M^4,g)}\int_M Q dv.
\end{gather}
 Moreover, equality occurs if and only if $g$ is an Einstein metric.
\end{theorem}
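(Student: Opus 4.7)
The plan is to use a conformal factor coming from the first eigenfunction of the Yamabe operator, evaluate both sides of the desired inequality in that metric, and then use the pointwise Cauchy--Schwarz bound $|\Ric|^2 \geq R^2/n$ which in dimension $4$ gives $|\Ric|^2 \geq R^2/4$ with equality exactly at Einstein points.

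Concretely, let $u>0$ be a first eigenfunction of $L$, so $Lu=\lambda_1(L)u$. Pass to the conformal metric $g_u=u^{2}g$ (that is, $g_u=u^{\frac{4}{n-2}}g$ for $n=4$). Then the conformal transformation rule $Lu=R_u u^{(n+2)/(n-2)}$ recalled in Section~\ref{laplacienconforme} reads $R_u = \lambda_1(L) u^{-2}$, and since $dv_u=u^4 dv$ one gets the key identity
\begin{equation*}
\int_M R_u^{\,2}\, dv_u = \lambda_1(L)^{2}\,\Vol(M^4,g).
\end{equation*}

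Next, exploit the conformal invariance of $\int_M Q\,dv$ established after equation~\eqref{CGB_Q}: $\int_M Q\,dv = \int_M Q_u\,dv_u$. Because $\int_M \Delta_{g_u} R_u\, dv_u = 0$ on a closed manifold, plugging $n=4$ into formula~\eqref{qcurvature} and unpacking $|S|^2$ in terms of $\Ric$ and $R$ yields
\begin{equation*}
\int_M Q_u\,dv_u = \tfrac{1}{6}\int_M\bigl(R_u^{\,2} - 3|\Ric_{g_u}|^{2}\bigr) dv_u.
\end{equation*}
Applying the pointwise inequality $|\Ric_{g_u}|^{2}\geq R_u^{\,2}/4$ shows $\int_M Q_u\,dv_u \leq \tfrac{1}{24}\int_M R_u^{\,2}\,dv_u$, which combined with the identity above gives \eqref{Inequality4}.

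For the equality case, equality in the Cauchy--Schwarz bound $|\Ric|^2 \geq R^2/4$ at every point forces $g_u$ to be Einstein. But then $R_u$ is constant, and the relation $R_u = \lambda_1(L)u^{-2}$ forces $u$ to be constant, so $g$ itself differs from the Einstein metric $g_u$ only by a homothety and is therefore Einstein. Conversely, if $g$ is Einstein then $R$ is constant, the constant function is a first eigenfunction of $L$ with $\lambda_1(L)=R$, and a direct calculation confirms equality. The only slightly delicate point is verifying that the first eigenfunction may indeed be taken positive (so that $g_u$ is a genuine metric); this is standard since $L$ is a second-order Schrödinger-type operator and the first eigenspace is one-dimensional with a nowhere-vanishing representative.
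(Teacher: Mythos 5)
Your proof is correct and follows essentially the same route as the paper: you conformally rescale by the (positive) first eigenfunction of $L$, so that $R_u=\lambda_1(L)u^{-2}$ gives $\int_M R_u^2\,dv_u=\lambda_1(L)^2\Vol(M^4,g)$, and then discard a nonnegative term to bound $\int_M Q_u\,dv_u$ by $\tfrac{1}{24}\int_M R_u^2\,dv_u$. The only cosmetic difference is that you phrase the pointwise inequality as Cauchy--Schwarz, $|\Ric|^2\geq R^2/4$, whereas the paper writes the equivalent fact $|E|^2\geq 0$ with $E=\Ric-(R/4)g$ (indeed $|E|^2=|\Ric|^2-R^2/4$); your treatment of the equality case is also the same in spirit as the one the paper spells out for Theorem~\ref{main}.
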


\begin{proof} Recall that if $n=4$, the  $Q$-curvature is def\/ined by:
\begin{gather*}
6Q=R^2-3|\Ric|^2+\Delta R
\end{gather*}
and then one can easily check that:
\begin{gather*}
24 Q= R^2-12|E|^2+4\Delta R,
\end{gather*}
 where $E:=\Ric-(R/4)g$ is the Einstein tensor of $(M^4,g)$. Now for $g_u=e^{2u}g$, we can write:
\begin{gather}\label{dimension4}
24\int_MQ_udv_u=\int_M R^2_u dv_u-12\int_M |E_u|^2_u dv_u\leq \int_M R^2_u dv_u
\end{gather}
since $|E_u|^2_u\geq 0$. We now choose an adapted conformal weight, namely an eigenfunction of the Yamabe operator associated with its f\/irst eigenvalue that is a smooth positive function $h_1$ such that:
\begin{gather*}
L h_1=\lambda_1(L)\;h_1.
\end{gather*}
Consider the conformal change of metrics $g_{h_1}=h_1^2 g\in [g]$ and inequality (\ref{dimension4}) written for the metric $g_{h_1}$ reads:
\begin{gather*}
24\int_M Q_{h_1} dv_{h_1}\leq \int_M R^2_{h_1} dv_{h_1}=\lambda_1(L)^2\Vol(M^4,g)
\end{gather*}
where we used the fact that:
\begin{gather*}
R_{h_1}=h_1^{-3}L h_1=\lambda_1(L)h_1^{-2}
\end{gather*}
and $dv_{h_1}=h_1^4dv$. Finally since $n=4$, we use the conformal invariance of the left-hand side of the preceding inequality (see Section~\ref{paneitz}) to get inequality~(\ref{Inequality4}).
\end{proof}

If we now apply inequality (\ref{hijazi2}), we obtain:

\begin{corollary}\label{corr4}
Under the assumptions of Theorem~{\rm \ref{main4}}, if $M$ is spin and $\lambda_1(L)>0$, then:
\begin{gather*}
\lambda_1(D)^4 \geq\frac{1}{9}\lambda_1(L)^2\geq\frac{8}{3}\frac{\int_M Q dv}{\Vol(M^4,g)}.
\end{gather*}
Equality in both inequalities is characterized by the existence of a Killing spinor, in particular, the manifold is the round sphere.
\end{corollary}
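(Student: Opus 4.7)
The proof is a direct chaining of the two inequalities already established. Since $\lambda_1(L) > 0$ by hypothesis, both sides of the Hijazi inequality (\ref{hijazi2}) specialized to $n = 4$ are nonnegative and read $\lambda_1(D)^2 \geq \frac{1}{3}\lambda_1(L)$. Squaring gives $\lambda_1(D)^4 \geq \frac{1}{9}\lambda_1(L)^2$. Theorem~\ref{main4} then supplies $\lambda_1(L)^2 \geq \frac{24}{\Vol(M^4,g)}\int_M Q\,dv$. Combining the two yields the announced double inequality, with final constant $\frac{24}{9} = \frac{8}{3}$.

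For the rigidity statement, equality in the final bound forces equality in each link of the chain separately. Equality in Theorem~\ref{main4} forces $g$ to be Einstein. Equality in (\ref{hijazi2}), via the analysis behind Theorem~\ref{hijazi}, forces the first eigenspinor of $D$ to be a Killing spinor in the sense of (\ref{killing}). To upgrade from \emph{Einstein with a Killing spinor} to \emph{round sphere}, one invokes B\"ar's classification of closed Riemannian spin manifolds carrying a non-trivial real Killing spinor, which in dimension four forces the underlying manifold to be isometric to the round $S^4$. Conversely, on $S^4$ one checks directly that every link in the chain becomes an equality, closing the characterization.

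The principal subtlety here is not the algebraic chaining but the last rigidity step: pinning down \emph{round sphere} rather than merely Einstein requires appealing to the Killing-spinor classification. One should also note that the hypothesis $\lambda_1(L) > 0$ is used essentially, both to justify squaring (\ref{hijazi2}) without a sign change and to ensure that the integral $\int_M Q\,dv$ is automatically nonnegative through the resulting chain, so that the statement is non-vacuous.
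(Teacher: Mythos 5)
Your proof is correct and follows the same route the paper takes: the corollary is simply the Hijazi inequality (\ref{hijazi2}) at $n=4$ (squared, using $\lambda_1(L)>0$) chained with Theorem~\ref{main4}, and the equality analysis combines the rigidity of both links (Killing eigenspinor from Theorem~\ref{hijazi}, Einstein from Theorem~\ref{main4}) with the known classification of closed $4$-manifolds admitting real Killing spinors — the paper has Friedrich's $4$-dimensional result~\cite{friedfour} in mind, which is equivalent in this case to the appeal to B\"ar's cone classification that you make.
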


\begin{remark}
By the Chern--Gauss--Bonnet formula, it follows that the total Branson's $Q$-curvature is a multiple of the total $\sigma_2$-curvature (the classical second elementary function associated with the Schouten tensor).  Hence Corollary~\ref{corr4} is exactly the result obtained by G.~Wang \cite{gwang} but it seems that there is a serious gap in his proof regarding the prescription, in a conformal class, of the $\sigma_2$-curvature function.
\end{remark}

We now consider the general case:

\begin{theorem}\label{main}
Let $(M^n,g)$ be a closed compact Riemannian manifold with $n\geq 5$. If the first eigenvalue $\lambda_1(P)$ of the Paneitz--Branson operator is positive, then we have:
\begin{gather}\label{main1}
\lambda^2_1(L)\geq\frac{16n(n-1)^2}{(n^2-4)(n-4)}\lambda_1(P) .
\end{gather}
 Moreover equality occurs if and only if $g$ is an Einstein metric.
\end{theorem}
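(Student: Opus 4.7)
The plan is to mirror the four-dimensional argument of Theorem~\ref{main4}, replacing the dimension-four conformal invariance of the total $Q$-curvature by the conformal covariance of the Paneitz--Branson operator itself, exploited through the Rayleigh quotient on a carefully chosen test function. First I would re-express (\ref{qcurvature}) algebraically. Starting from $|S|^2 = \frac{1}{(n-2)^2}\bigl(|\Ric|^2 - \frac{(3n-4)R^2}{4(n-1)^2}\bigr)$ and $|\Ric|^2 = |E|^2 + R^2/n$ with $E := \Ric - (R/n)g$, a direct substitution (whose only delicate step is the algebraic collapse $n^4-4n^3+16n-16 = (n-2)^3(n+2)$) gives
\begin{gather*}
Q = \frac{n^2-4}{8n(n-1)^2}R^2 - \frac{2|E|^2}{(n-2)^2} + \frac{\Delta R}{2(n-1)}.
\end{gather*}
Dropping $|E|^2 \geq 0$ yields the pointwise bound $Q \leq \frac{n^2-4}{8n(n-1)^2}R^2 + \frac{\Delta R}{2(n-1)}$, with equality if and only if $g$ is Einstein.

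The second step is to choose a positive first eigenfunction $h_1$ of $L$, so that in the conformal metric $g_{h_1} := h_1^{4/(n-2)}g$ one has $R_{h_1} = \lambda_1(L)h_1^{-4/(n-2)}$ and $dv_{h_1} = h_1^{2n/(n-2)}dv$. Applying the pointwise bound in $g_{h_1}$ and integrating against $dv_{h_1}$ (which annihilates the $\Delta_{h_1}R_{h_1}$ term) gives
\begin{gather*}
\int_M Q_{h_1}\,dv_{h_1} \leq \frac{n^2-4}{8n(n-1)^2}\,\lambda_1(L)^2 \int_M h_1^{2(n-4)/(n-2)}\,dv.
\end{gather*}
To bring in $\lambda_1(P)$, set $v := h_1^{(n-4)/(n-2)}$, which satisfies $v^{4/(n-4)}g = g_{h_1}$, so (\ref{paneitzconforme}) yields $Pv = \frac{n-4}{2}Q_{h_1}\,h_1^{(n+4)/(n-2)}$. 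Inserting $v$ into the Rayleigh characterization of $\lambda_1(P)$ gives
\begin{gather*}
\lambda_1(P)\int_M h_1^{2(n-4)/(n-2)}\,dv \leq \int_M vPv\,dv = \frac{n-4}{2}\int_M Q_{h_1}\,dv_{h_1}.
\end{gather*}
Chaining the two displays and cancelling the strictly positive common factor produces exactly (\ref{main1}).

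For equality, saturating (\ref{main1}) forces both $E_{h_1} \equiv 0$ (so $g_{h_1}$ is Einstein) and $v$ to be a first eigenfunction of $P$. On an Einstein manifold $R_{h_1}$ is constant, hence so is $Q_{h_1}$, and then the identity $\lambda_1(P)v = \frac{n-4}{2}Q_{h_1}v^{(n+4)/(n-4)}$ forces $v$, and therefore $h_1$, to be constant; consequently $g_{h_1}$ is a constant multiple of $g$ and $g$ itself is Einstein. The converse follows by taking $h_1$ constant and verifying directly that each intermediate inequality becomes an equality. The calculation is almost entirely bookkeeping through the various conformal weights $(n\pm 4)/(n-2)$ and $4/(n-4)$; the only mildly delicate point is isolating the clean coefficient of $R^2$ in the formula for $Q$ via the factorization above, and the positivity hypothesis $\lambda_1(P) > 0$ is used only to ensure that the final estimate is non-trivial.
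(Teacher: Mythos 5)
Your proposal is correct and follows essentially the same path as the paper's own proof: rewrite $Q$ in terms of the Einstein tensor to isolate the $R^2$ term, choose a first eigenfunction $h_1$ of $L$ and pass to the conformal metric $g_{h_1}=h_1^{4/(n-2)}g$, then feed $v=h_1^{(n-4)/(n-2)}$ into the Rayleigh quotient of $P$ via the conformal covariance (\ref{paneitzconforme}). The only cosmetic difference is in the equality case, where the paper reads off that $u_1$ is constant directly from $R_{u_1}=\lambda_1(L)h_1^{-4/(n-2)}$ being constant (since $g_{u_1}$ Einstein and $n\ge 3$), while you arrive at the same conclusion slightly more circuitously through the $Q$-curvature equation; both routes are valid.
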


\begin{proof} First note that the $Q$-curvature, def\/ined in (\ref{qcurvature}), can be written as:
\begin{gather*}
Q=\frac{n^2-4}{8n(n-1)^2}R^2-\frac{2}{(n-2)^2}|E|^2+\frac{1}{2(n-1)}\Delta R,
\end{gather*}
where $E:=\Ric-\frac{R}{n}g$ is the Einstein tensor of $(M^n,g)$. We now consider the  metric  $g_u=u^{\frac{4}{n-4}}g$ where $u$ is a smooth positive function. Stokes formula gives:
\begin{gather*}
\int_MQ_u\, dv_u  =  \int_M\left(\frac{n^2-4}{8n(n-1)^2}R_u^2-\frac{2}{(n-2)^2}|E_u|_u^2+\frac{1}{2(n-1)}\Delta_u R_u\right)dv_u\\
\phantom{\int_MQ_u\, dv_u }{}  =  \int_M\left(\frac{n^2-4}{8n(n-1)^2}R_u^2-\frac{2}{(n-2)^2}|E_u|_u^2\right)dv_u  \leq  \frac{n^2-4}{8n(n-1)^2}\int_MR_u^2\, dv_u.
\end{gather*}
On the other hand, with the help of the relation (\ref{paneitzconforme}) and since $dv_u=u^{\frac{2n}{n-4}}dv$, one can check that:
\begin{gather*}
\frac{n-4}{2}\int_M Q_u\, dv_u = \int_Mu Pu\, dv,
\end{gather*}
which gives:
\begin{gather}\label{inegalite}
\int_M u Pu\, dv\leq \frac{(n^2-4)(n-4)}{16n(n-1)^2}\int_M R_u^2\, dv_u
\end{gather}
for all $u$ smooth and positive on $M$. We will now see that a suitable choice of the conformal weight will lead to the desired inequality. We choose $h_1\in C^\infty(M)$ a smooth eigenfunction of the conformal Laplacian associated with its f\/irst eigenvalue, that is $L h_1=\lambda_1(L)h_1$. It is a standard fact that $h_1$ can be chosen to be positive on $M$. Let $g_{u_1}={u_1}^{\frac{4}{n-4}}g\in [g]$ where $u_1:=h_1^{\frac{n-4}{n-2}}$ is a~smooth positive function on $M$. Applying (\ref{inegalite}) in the metric $g_{u_1}$ leads to:
\begin{gather*}
\int_M u_1 P u_1\, dv\leq \frac{(n^2-4)(n-4)}{16n(n-1)^2}\int_M R_{u_1}^2\, dv_{u_1}.
\end{gather*}
 The choice of $u_1$ allows to compute that the scalar curvature of the manifold $(M^n,g_{u_1})$ is given~by:
\begin{gather}\label{scalar}
R_{u_1}=h_1^{-\frac{n+2}{n-2}}L(h_1)=\lambda_1(L) h_1^{-\frac{4}{n-2}}
\end{gather}
and thus $(\ref{inegalite})$ reads:
\begin{gather*}
\int_M u_1 P u_1\, dv\leq \frac{(n^2-4)(n-4)}{16n(n-1)^2}\lambda_1(L)^2\int_M h_1^{2\frac{n-4}{n-2}} \, dv=\frac{(n^2-4)(n-4)}{16n(n-1)^2}\lambda_1(L)^2\int_M u_1^2\, dv.
\end{gather*}
Inequality (\ref{main1}) follows directly from the variational characterization of $\lambda_1(P)$. If equality is achieved, then it is clear that the manifold $(M^n, g_{u_1})$ is Einstein. However, with the help of (\ref{scalar}) we easily conclude that $u_1$ has to be constant and thus $(M^n, g)$ is also an Einstein manifold.
\end{proof}

Inequality~(\ref{hijazi2}) and Theorem~\ref{main} then give:
\begin{corollary}\label{corr1}
Under the assumptions of Theorem {\rm \ref{main}}, if $M$ is spin and $\lambda_1(L)>0$, then:
\begin{gather*}
\lambda_1(D)^4\geq\frac{n^2}{16(n-1)^2}\lambda^2_1(L)\geq\frac{n^3}{(n^2-4)(n-4)}\lambda_1(P).
\end{gather*}
Equality in both inequalities is characterized by the existence of a Killing spinor, in particular the manifold is Einstein.
\end{corollary}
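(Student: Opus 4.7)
The plan is to simply chain Theorem \ref{main} with the conformal Hijazi inequality (\ref{hijazi2}). Since both results are already established in the paper, the proof of Corollary~\ref{corr1} reduces to an algebraic combination plus a careful analysis of the equality case.

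First, I would square inequality (\ref{hijazi2}), which requires $\lambda_1(L) > 0$ (assumed) so that squaring preserves the direction of the inequality; this gives
\begin{gather*}
\lambda_1(D)^4 \;\geq\; \frac{n^2}{16(n-1)^2}\,\lambda_1(L)^2.
\end{gather*}
Next, I would invoke Theorem~\ref{main}, which (since $\lambda_1(P) > 0$ is part of the hypotheses inherited from that theorem) yields
\begin{gather*}
\lambda_1(L)^2 \;\geq\; \frac{16\,n(n-1)^2}{(n^2-4)(n-4)}\,\lambda_1(P).
\end{gather*}
Multiplying the two inequalities through gives the claimed chain, with the constants combining as
\begin{gather*}
\frac{n^2}{16(n-1)^2}\cdot\frac{16\,n(n-1)^2}{(n^2-4)(n-4)} \;=\; \frac{n^3}{(n^2-4)(n-4)}.
\end{gather*}

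For the equality discussion, I would argue separately for each inequality in the chain. Equality in the first inequality is exactly the equality case of Theorem~\ref{hijazi}, which forces the first eigenspinor of $D$ to be a Killing spinor satisfying (\ref{killing}); by the classical consequence of the Killing equation (differentiating (\ref{killing}) and using the Ricci identity), the underlying metric $g$ is then automatically Einstein. Equality in the second inequality is the equality case of Theorem~\ref{main}, which we already know forces $g$ to be Einstein. Hence simultaneous equality in both inequalities is characterized by the existence of a Killing spinor associated with $\lambda_1(D)$, and in particular the manifold is Einstein.

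There is no real obstacle here beyond bookkeeping: the positivity assumption $\lambda_1(L) > 0$ is what lets us square (\ref{hijazi2}) safely and is also what allows Theorem~\ref{main} to be applied in the direction we need (making $\lambda_1(L)^2$ a meaningful lower bound for $\lambda_1(P)$). The mildest subtlety is to note that the equality case of the combined chain is not stronger than the equality case of Theorem~\ref{hijazi} alone, since a Killing spinor on a connected manifold already forces the Einstein condition, which is precisely the equality condition for Theorem~\ref{main}; so the two equality conditions are compatible and are jointly characterized by the Killing spinor condition.
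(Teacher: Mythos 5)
Your proposal is correct and matches the paper's own (essentially one-line) proof: the paper simply notes that Inequality~(\ref{hijazi2}) and Theorem~\ref{main} combine to give the chain, exactly as you do, and your verification of the constant $\frac{n^2}{16(n-1)^2}\cdot\frac{16n(n-1)^2}{(n^2-4)(n-4)}=\frac{n^3}{(n^2-4)(n-4)}$ is right. Your handling of the equality case---observing that a Killing spinor already forces Einstein, so equality in the Hijazi step subsumes the equality condition of Theorem~\ref{main}---is a mild but correct elaboration of what the paper leaves implicit.
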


\subsection*{Acknowledgements}

We would like to thank the referees for their careful reading and suggestions.

\pdfbookmark[1]{References}{ref}
\LastPageEnding

\end{document}